\def\CC{\mathbb{C}}
\def\PP{\mathbb{P}}
\def\QQ{\mathbb{Q}}
\def\RR{\mathbb{R}}
\def\ZZ{\mathbb{Z}}
\def\RP2{\RR \PP^2}
\newcommand{\inv}[1]{{#1}^{-1}}
\DeclareMathOperator{\SL}{SL}
\DeclareMathOperator{\PSL}{PSL}
\def\X{\mathfrak{X}}
\def\R{\mathfrak{R}}
\def\P{\mathcal{P}}
\def\T3p{\mathcal{T}_3^+}
\newcommand{\wt}[1]{\widetilde{#1}}
\newcommand{\aut}{\mathrm{Aut}}
\def\bound{\partial}
\DeclareMathOperator{\Hom}{Hom}
\DeclareMathOperator{\tr}{tr}
\newcommand\restr[2]{{
		\left.\kern-\nulldelimiterspace 
		#1 
		\vphantom{\big|} 
		\right|_{#2} 
}}
\theoremstyle{plain}
\newtheorem{thm}{Theorem}
\newtheorem*{thm*}{Theorem}
\newtheorem*{lem*}{Lemma}
\newtheorem{cor}[thm]{Corollary}
\newtheorem*{cor*}{Corollary}
\newtheorem*{cla*}{Claim}
\newtheorem{pro}[thm]{Proposition}
\newtheorem*{pro*}{Proposition}
\newtheorem*{rem*}{Remark}
\newtheorem*{defn*}{Definition}
\newtheorem*{defns*}{Definitions}
\newtheorem*{exa*}{Example}
\newtheorem*{exe*}{Exercise}
\newtheorem*{que*}{Question}
\begin{document}
\title{Ideal points of character varieties, algebraic non-integral\\ representations, and undetected closed essential surfaces in 3--manifolds} 
\author{Alex Casella, Charles Katerba and Stephan Tillmann}
	
\begin{abstract}
Closed essential surfaces in a 3--manifold can be detected by ideal points of the character variety or by algebraic non-integral representations. We give examples of closed essential surfaces not detected in either of these ways.
For ideal points, we use Chesebro's module-theoretic interpretation of Culler-Shalen theory. 
As a corollary, we construct an infinite family of closed hyperbolic Haken 3--manifolds with no algebraic non-integral representation into $\PSL_2 ( \CC)$, resolving a question of Schanuel and Zhang. 
\end{abstract}
	
\primaryclass{57M27}
	
\keywords{character variety, essential surface, algebraic non-integral representation}

	
\maketitle
	
	\section{Introduction}

A \emph{knot manifold} is a compact, irreducible, orientable 3--manifold whose boundary consists of a single torus.
We say that a knot manifold is \emph{large} if it contains a closed essential surface. Here, an \emph{essential surface} $\Sigma$ in a 3--manifold is an orientable, properly embedded surface with no sphere or boundary parallel components such that the homomorphism on fundamental groups induced by inclusion is injective for each connected component of $\Sigma$. 

Let $N$ be a knot manifold and 
$\Gamma$ denote its fundamental group.
Culler and Shalen \cite{CS1} construct essential surfaces in 3--manifolds from representations of
$\Gamma$ into $\SL_2 ( \CC)$. This combines algebraic geometry, valuations and actions on trees, and 
has seen broad applications in 3--manifold topology (see for example \cite{BZ2, BZ1, CCGLS, CGLS}).  In this introduction, we assume some familiarity with Culler-Shalen theory---basic definitions and facts are collected in \S\ref{sec:charvar} and \S\ref{sec:preliminaries}.

The set $\X_{\SL} ( N )$ of characters of representations of $\Gamma$ into $\SL_2 ( \CC)$ admits the structure of a complex affine algebraic set called the {character variety of $N$}.  Essential surfaces can be associated to certain representations $\Gamma \to \SL_2 ( F)$, where $F$ is a field with a valuation $v \co F^\times \to \ZZ.$ The character variety provides two ways to find such representations: by passing to ideal points and by carrying algebraic non-integral, or ANI, representations. If the essential surface $\Sigma$ in $N$ is associated to an ideal point of a curve in $\X_{\SL}(N)$ it is \emph{detected by (an ideal point of) the character variety}.  Similarly, if $\Sigma$ is associated to an ANI-representation, we say $\Sigma$ is \emph{ANI-detected by the character variety}. Similar terminology is adopted for the \emph{boundary slopes} of essential surfaces; that is, unoriented isotopy classes of simple closed curves on the boundary of $N$ that can be represented by the boundary components of essential surfaces in $N$. The theory and its applications were extended to representations into $\PSL_2 ( \CC)$ by Boyer and Zhang~\cite{BZ2}.

This paper addresses the general question of which essential surfaces in a 3--manifold are detected by ideal points or ANI-representations. To this end, Chesebro and the third author~\cite{CT} showed that there are boundary slopes of knot manifolds which are not detected by the character variety. Motegi showed that there are closed graph manifolds that contain essential tori \cite{MOT} not detected by the character variety. 
Boyer and Zhang~\cite[Theorem 1.8]{BZ2} showed that there are infinitely many closed hyperbolic 3--manifolds whose character varieties do not detect closed essential surfaces contained in these manifolds. 
Schanuel and Zhang~\cite[Example 17]{SZ} gave an example of a closed hyperbolic 3--manifold with a closed essential surface that cannot be detected by an ideal point but is ANI-detected. We first turn our attention to knot manifolds, and as an application answer an open question of Schanuel and Zhang. The knots $10_{152}$, $10_{153}$ and $10_{154}$ were shown to be large by Burton, Coward and the third author~\cite{BCT}. This paper shows:

\begin{thm}
Let $N$ be the complement in $S^3$ of the large hyperbolic knot $10_{152}$, $10_{153}$ or $10_{154}$.  
No closed essential surface in $N$ is detected by an ideal point of the character variety of $N$.
\label{thm:main}
\end{thm}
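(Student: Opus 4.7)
The plan is to show, for each of the three knot complements $N$ in the statement, that no ideal point of any curve $C \subseteq \X_{\SL}(N)$ detects a closed essential surface.

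Recall that an ideal point $\tilde{x}$ of $C$ corresponds to a discrete valuation $v$ on $\CC(C)$; the tautological representation $\Gamma \to \SL_2(\CC(C))$, composed with the action of $\SL_2$ on its Bruhat--Tits tree, yields a non-trivial $\Gamma$-action on a simplicial tree and hence an essential surface $\Sigma$ in $N$. Since $N$ has a single torus boundary, $\pi_1(\partial N) = \langle \mu, \lambda \rangle \cong \ZZ^2$. Because $\mu$ and $\lambda$ commute and are torsion-free, either they share a fixed vertex of the tree, in which case each has trace in the valuation ring $\O_v$, or they translate along a common axis, in which case at least one has trace of negative valuation. The Culler--Shalen surface $\Sigma$ is closed precisely when the peripheral subgroup has a common fixed vertex, so the theorem reduces to exhibiting, at every ideal point of every curve in $\X_{\SL}(N)$, a peripheral trace function with negative valuation.

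I would begin from explicit fundamental group presentations for the three knot groups (readable from each knot diagram) and compute $\X_{\SL}(N)$ via standard elimination procedures, obtaining the full list of irreducible curve components through a primary decomposition. On each curve I would then express $\tr \mu$ and $\tr \lambda$ as regular functions and locate their zeros and poles on the smooth projective model. At this point Chesebro's module-theoretic reformulation of Culler--Shalen theory replaces the tree action at an ideal point by the data of a specific $\Gamma$-module over the local ring, reducing the peripheral fixed-vertex condition to a criterion on invariant lattices. This formulation is particularly amenable to explicit computation: it avoids delicate valuation arguments and works directly with the algebraic data on the curve.

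The principal obstacle is completeness. Each of $10_{152}$, $10_{153}$ and $10_{154}$ is large \cite{BCT}, so closed essential surfaces do exist in $N$, and a priori any of them could be detected by an ideal point on any curve of $\X_{\SL}(N)$; in particular, curves carrying only reducible characters and any small non-geometric components cannot be ignored. Once the complete list of components and of ideal points on each is available, the module-theoretic framework reduces non-detection of closed surfaces to a finite algebraic check on each component, and carrying out these checks finishes the proof.
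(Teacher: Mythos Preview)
Your proposal is a plan rather than a proof, and the plan mischaracterises the tool it invokes. Chesebro's module-theoretic criterion is not about ``a specific $\Gamma$-module over the local ring'' or ``invariant lattices'' at individual ideal points; it is a global statement about the coordinate ring. For an irreducible component $X$ of $\X_{\SL}(N)$ and $\gamma\in\pi_1(\partial N)$, the ring $T_R(X)$ fails to be finitely generated as an $R[I_\gamma]$-module if and only if some ideal point of $X$ detects either a closed essential surface or the boundary slope $\gamma$. The paper extends this to an arbitrary union $Y$ of components (in particular $Y=\X_{\SL}(N)$ itself) and passes from $\CC$ to $\QQ$: no ideal point of $\X_{\SL}(N)$ detects a closed essential surface if and only if $T_\QQ(\X_{\SL}(N))$ is finitely generated over $\QQ[I_\gamma]$ for some peripheral $\gamma$ that is not a boundary slope. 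This single finite-generation check, carried out once for the whole variety, is the entire content of the proof.

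Consequently the programme you outline---primary decomposition into irreducible curves, construction of smooth projective models, enumeration of ideal points, and a local analysis at each---is precisely what the paper's method avoids. Those steps are far harder computationally (primary decomposition of these varieties over $\QQ$ is already forbidding) and your sketch gives no indication of how they would actually be completed for any of the three knots. The paper instead computes defining equations for $\X_{\SL}(N)$ over $\QQ$ from a three-generator presentation of $\pi_1(N)$, adjoins a variable $s$ set equal to $I_{\mu^{-1}\lambda}$, and asks \textit{Macaulay2} whether the resulting quotient ring is module-finite over $\QQ[s]$; the \texttt{basis} command returns an explicit finite generating set (48 monomials for $10_{153}$), and the corollaries then force $\mu^{-1}\lambda$ not to be a boundary slope and no closed surface to be detected. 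Since the theorem is ultimately a computer verification, any valid proof must either report such a computation or replace it by a structural argument; your proposal does neither.
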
 

The proof of Theorem \ref{thm:main} uses a module-theoretic approach to Culler-Shalen theory developed by Chesebro~\cite{C}.  As will be explained further in \S\ref{sec:main}, this approach transforms the problem into a computation in commutative algebra.  Thus, we are able to answer this question using algorithmic techniques implemented in the software package \textit{Macaulay2}~\cite{M2}.  It also follows from \cite[Proposition 5.2]{C} that no closed essential surface in the complement of $10_{152}$, $10_{153}$ or $10_{154}$ is ANI-detected.

Joshua Howie informed the authors that $10_{152}$, $10_{153}$, and $10_{154}$ are the first non-alternating adequate knots in Rolfsen's table.  It would be interesting to know the extent to which Howie's observation may give a geometric or topological obstruction to detecting closed essential surfaces by ideal points. 

As a corollary to Theorem \ref{thm:main}, we answer Question 9 of Schanuel and Zhang~\cite{SZ}.  They ask whether there are large closed hyperbolic 3--manifolds which have no ANI-representations into $\PSL_2 ( \CC).$ We answer this question affirmatively. 

\begin{cor} 
There are infinitely many large closed hyperbolic  3--manifolds with no ANI-representations into $\PSL_2(\CC)$.
\label{cor:noani}  
\end{cor}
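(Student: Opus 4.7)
The plan is to realize the required closed hyperbolic 3--manifolds as Dehn fillings on one of the knot complements from Theorem \ref{thm:main}. Fix $N$ to be the exterior of one of $10_{152}$, $10_{153}$, $10_{154}$, and let $\Sigma\subset N$ be a closed essential surface (which exists because $N$ is large). By Thurston's hyperbolic Dehn surgery theorem, together with the standard fact that a given closed essential surface in a knot manifold remains essential under all but finitely many Dehn fillings, for cofinitely many slopes $\alpha$ on $\partial N$ the filled manifold $N(\alpha)$ is closed hyperbolic and contains $\Sigma$ as a closed essential surface; in particular each such $N(\alpha)$ is large. Infinitely many of these $N(\alpha)$ are pairwise non-homeomorphic, since their hyperbolic volumes accumulate at $\vol(N)$ from below.

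The crux is to show that cofinitely many such $N(\alpha)$ admit no ANI-representation into $\PSL_2(\CC)$. Suppose $\rho\co \pi_1(N(\alpha))\to \PSL_2(\CC)$ is ANI. The filling induces a surjection $\pi_1(N)\twoheadrightarrow \pi_1(N(\alpha))$, and precomposition with it produces an ANI-representation $\widetilde\rho\co \pi_1(N)\to \PSL_2(\CC)$ with $\alpha\in \Ker\widetilde\rho$. A non-integral trace value of $\widetilde\rho$ provides a discrete valuation on the trace field of $\widetilde\rho$, and hence an action of $\pi_1(N)$ on a Bass--Serre tree $\tree$ without global fixed vertex; the Culler--Shalen/Bass--Serre construction (as used in \cite[Proposition 5.2]{C}) then produces from this action an essential surface $S\subset N$, whose boundary slope, if any, is determined by the induced action of the peripheral subgroup $\pi_1(\partial N)\cong \ZZ^2$ on $\tree$.

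I then analyze this peripheral action. Since $\widetilde\rho(\alpha)$ is trivial, $\alpha$ fixes every vertex of $\tree$. If all of $\pi_1(\partial N)$ shares a global fixed vertex, then $S$ can be taken to be closed, contradicting the observation (stated immediately after Theorem \ref{thm:main}) that \cite[Proposition 5.2]{C} forbids ANI-detection of closed essential surfaces in $N$. Otherwise $\pi_1(\partial N)$ has no global fixed point; its translation-length map $\tau\co \pi_1(\partial N)\to \RR$ is then non-trivial with $\ker\tau$ a rank-one subgroup containing the primitive element $\alpha$, forcing $\ker\tau=\langle\alpha\rangle$, so that the boundary slope of $S$ is $\alpha$. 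Hatcher's theorem yields only finitely many boundary slopes of essential surfaces in $N$, so only finitely many values of $\alpha$ can arise in this way; removing these from the cofinite set of the first paragraph leaves an infinite family of slopes for which $N(\alpha)$ is closed, hyperbolic, large, and admits no ANI-representation into $\PSL_2(\CC)$. The principal subtlety lies in this peripheral analysis; the crucial leverage is that \cite[Proposition 5.2]{C} blocks closed-surface ANI-detection in $N$, which is exactly what produces the contradiction in the global-fixed-vertex case.
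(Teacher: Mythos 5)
Your argument reproduces the paper's strategy: realize the target manifolds as Dehn fillings $N(\alpha)$ of the knot exteriors in Theorem~\ref{thm:main}, pull an ANI representation back to $\pi_1(N)$, observe that the filling slope has integral (in fact $\pm 2$) trace, deduce via the peripheral dichotomy that any associated essential surface is closed, and derive a contradiction from the fact that $N$ admits no ANI-detected closed essential surface. Your explicit translation-length analysis of the peripheral action is a hands-on re-derivation of the case split in Theorem~\ref{thm:cs1}, which the paper simply cites; and excluding the finitely many boundary slopes of $N$ at the end (via Hatcher) rather than choosing non-boundary filling slopes at the outset is a cosmetic reordering.

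The one genuine gap is that you never lift from $\PSL_2$ to $\SL_2$. Chesebro's Proposition 5.2, which you invoke as the final contradiction, is a statement about the $\SL_2$ character variety $\X_{\SL}(N)$, and the observation following Theorem~\ref{thm:main} that no closed essential surface in $N$ is ANI-detected likewise refers to the $\SL_2$ setting. Your $\widetilde\rho=\rho\circ r$ takes values in $\PSL_2(\CC)$, so neither result applies directly; even the phrase ``trace field of $\widetilde\rho$'' requires care in the $\PSL_2$ context. The paper repairs exactly this by invoking Culler's lifting theorem to produce an $\SL_2(\CC)$ representation $\wt{\rho}$ of $\pi_1(N)$ lifting $\rho\circ r$; only then is Chesebro's theorem applicable. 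You should insert that step. After lifting, one has $\wt{\rho}(\alpha)=\pm I$ (not necessarily $I$), but $\pm I$ are central and hence act trivially on the Bass--Serre tree, so your peripheral analysis goes through unchanged.
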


Corollary~\ref{cor:noani} follows from Theorem~\ref{thm:main} by considering sufficiently large Dehn fillings of any of the knots given by the theorem.  Under such a Dehn filling, a closed essential surface in the knot exterior will remain essential in the filled manifold \cite[Theorem 2.0.3]{CGLS} and the filled manifold will be hyperbolic by Thurston's Hyperbolic Dehn Surgery Theorem.  Using a result of Culler about lifting representations and a result of Chesebro connecting ANI-detected to detected closed essential surfaces, we show that the filled manifold cannot have any ANI-representations into $\PSL_2( \CC)$. 

The remainder of the paper is structured as follows.  In the next section, we review some basics concerning character varieties and describe an algorithm for their computation.  Section \ref{sec:preliminaries} outlines the essentials of Culler-Shalen theory and summarizes Chesebro's module-theoretic approach.  Section \ref{sec:main} begins with a description of our computational techniques and heuristic for finding knots whose character varieties may not detect closed essential surfaces.  We then prove the main results of this paper.

\textbf{Acknowledgements}\ The first author acknowledges support by the Commonwealth of Australia.
The second author acknowledges support by an NSF-EAPSI Fellowship (project number 1713920).
Research of the third author was supported by an Australian Research Council Future Fellowship (project number FT170100316). The authors thank Robert Loewe and Benjamin Lorenz for running the computation for $10_{154}$ on a cluster at TU Berlin.

\section{Character varieties}
\label{sec:charvar}

We describe the construction of character varieties  in the $\SL_2( \CC )$ case. The $\PSL_2 ( \CC)$ case is similar with only a few additional technicalities; see \cite{BZ2} for a detailed account of Culler-Shalen theory for $\PSL_2 ( \CC )$.

Let $\Gamma$ be a finitely presented group with presentation $\langle \gamma_1, \cdots, \gamma_n \mid r_1 , \ldots, r_m  \rangle$. A function $\rho \co \{\gamma_{j} \}_{j=1}^n \to \SL_2 (\CC)$ extends to a representation if and only if $\rho ( r_i)$ is the $2 \times 2$ identity matrix for each $1 \leq i \leq m$. The set $\R_{\SL}(\Gamma) := \Hom ( \Gamma, \SL_2 ( \CC))$ is therefore in natural 1-1 correspondence with an algebraic set in $\CC^{4n}$ and hence called the \emph{representation variety of $\Gamma$}. 

For each $\gamma \in \Gamma$ there is a regular function $I_\gamma \co \R_{\SL}( \Gamma ) \to \CC$ given by $I_\gamma ( \rho ) = \tr \rho ( \gamma )$.  Let $T( \Gamma)$ denote the ring with unity generated the set $\{I_\gamma \}_{\gamma \in \Gamma}$.  

\begin{pro}
{(cf. \cite{CS1} and \cite{GMA})} The ring $T(\Gamma)$ is generated by the set
 \[ \mathcal{G} = \{ I_{\gamma_{i_1} \cdots \gamma_{i_k}}   \mid \text{for } 1 \leq i_1 < \cdots <  i_k \leq n \text{ and }  k \leq 3 \}. \]
\label{prop:tgfg}
\end{pro}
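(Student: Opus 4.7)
The plan is to apply the classical Fricke trace identities for $\SL_2(\CC)$. The Cayley--Hamilton relation $A + A^{-1} = \tr(A)I$ for $A \in \SL_2(\CC)$ immediately gives
\[
\tr(A^{-1}) = \tr(A), \qquad \tr(AB) + \tr(AB^{-1}) = \tr(A)\tr(B),
\]
which, together with cyclic invariance $\tr(AB) = \tr(BA)$, translate into the identities
\[
I_{\gamma^{-1}} = I_\gamma, \qquad I_{\alpha\beta^{-1}} = I_\alpha I_\beta - I_{\alpha\beta}, \qquad I_{\alpha\beta} = I_{\beta\alpha}
\]
in $T(\Gamma)$, for all $\alpha, \beta, \gamma \in \Gamma$. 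These are the only tools needed.

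I would show, by induction on the complexity of a word $w = \gamma_{i_1}^{\epsilon_1}\cdots\gamma_{i_\ell}^{\epsilon_\ell}$ representing $\gamma$, that $I_\gamma$ lies in the subring generated by $\mathcal{G}$. Three reductions need to be performed: eliminate inverses, reduce the length to at most $3$, and sort indices into strictly increasing order. Isolated inverses disappear via $I_{\gamma^{-1}} = I_\gamma$, while an inverse embedded in a longer word is removed by $I_{\alpha\gamma_i^{-1}} = I_\alpha I_{\gamma_i} - I_{\alpha\gamma_i}$, which exchanges an inverse for terms of strictly smaller length or of equal length with fewer inverses. Sorting indices is immediate for $k \le 2$ via cyclic invariance; for $k = 3$ it follows from a further consequence of Fricke expressing $I_{\gamma_i\gamma_j\gamma_k} + I_{\gamma_i\gamma_k\gamma_j}$ as a polynomial in the length-$\le 2$ traces, which allows one to interchange the roles of $j$ and $k$ modulo $\mathcal{G}$.

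The main obstacle is the length reduction from $\ell \ge 4$ down to $\ell \le 3$. For a positive word $w = g_{i_1}g_{i_2}\cdots g_{i_\ell}$, applying Fricke with $\alpha = g_{i_1}g_{i_2}$ and $\beta = g_{i_3}\cdots g_{i_\ell}$ produces
\[
I_w = I_\alpha I_\beta - I_{\alpha\beta^{-1}},
\]
in which $I_\alpha$ and $I_\beta$ have strictly smaller length, but $I_{\alpha\beta^{-1}}$ still has length $\ell$, now carrying $\ell - 2$ inverses. The remedy is a lexicographic double induction on the pair (length, number of inverses): each application of the relevant identity strictly decreases one of these, so the process terminates in traces of positive words of length $\le 3$. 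Setting up this induction carefully is the only delicate bookkeeping; once it is in place, the proposition follows, expressing every $I_\gamma$ as a polynomial with integer coefficients in the elements of $\mathcal{G}$.
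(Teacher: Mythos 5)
Your plan diverges from the paper's at exactly one point, and that is where the gap lies. The paper's proof sketch takes the four\mbox{-}letter Fricke identity (its identity (4), which writes $2\tr ABCD$ as a polynomial in traces of words of length at most three) as an additional tool alongside the one\mbox{-} and two\mbox{-}letter identities, and uses it directly for length reduction. You instead declare the two\mbox{-}letter identity and cyclicity to be ``the only tools needed'' and try to force the length reduction through by a lexicographic double induction on (length, number of inverses). That induction does not terminate as claimed. Your length-reduction step $I_w = I_\alpha I_\beta - I_{\alpha\beta^{-1}}$ sends a positive word with profile $(\ell,0)$ to a term with profile $(\ell,\ell-2)$: same length, strictly \emph{more} inverses, hence a strict \emph{increase} in the lexicographic order with length as the primary key. (Reversing the priority does not help, since then your inverse-removal step is the one that increases.) So the assertion that ``each application of the relevant identity strictly decreases one of these'' is false, and no termination has been established.

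Concretely, run your recipe on $I_{abcd}$. Splitting and then repeatedly removing inverses gives
$I_{abcd} = I_{ab}I_{cd} - I_{abd^{-1}}I_c + I_{cab}I_d - I_{cabd}$,
returning you to a positive length\mbox{-}4 word $I_{cabd}$; another round produces $I_{bcad}$; a third returns to $I_{abcd}$. The process cycles, and what the cycle yields is an identity expressing $2I_{abcd}$, not $I_{abcd}$, as a polynomial in shorter traces. This factor of $2$ is intrinsic---it is precisely why the paper's four\mbox{-}letter identity carries a $2$ on the left-hand side---and it cannot be wished away by choosing a cleverer split: however you split, repeated use of $\tr XY=\tr X\tr Y-\tr XY^{-1}$ and cyclicity on a generic length\mbox{-}4 positive word either leads to a tautology or to a relation for $2I_w$. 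To repair your argument you would need to (i) state the four\mbox{-}letter identity explicitly and use it as the paper does, or (ii) carry out the cycle computation, obtain $2I_w$, and then justify dividing by $2$ in the coefficient ring (which is automatic over $\QQ$ or $\CC$, the setting the paper effectively works in, but is not free over $\ZZ$). Your handling of the remaining steps---eliminating inverses, and sorting the indices of a length\mbox{-}3 word using $I_{\gamma_i\gamma_j\gamma_k}+I_{\gamma_i\gamma_k\gamma_j}=(\text{poly in shorter traces})$---matches the paper and is fine.
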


\begin{proof}
We include a sketch of a proof of this proposition to remind the reader that there is an algorithm to write each $I_\gamma$ for $\gamma \in \Gamma$ as a polynomial in the elements of $\mathcal{G}$.  This algorithm is based on the following trace identities \cite[Lemmas 4.1 and 4.1.1]{GMA}.  If $A, B, C, D \in \SL_2 ( \CC)$, then 

\begin{align}
\tr A & =  \tr A^{-1} \label{eq:1}  \\
\tr AB & =   \tr A \tr B - \tr A \inv{B} \\
\tr ACB &  =  \tr A \tr BC + \tr B \tr AC + \tr C \tr AB - \tr A \tr B \tr C - \tr ABC  \\
 2 \cdot \tr ABCD & =  \tr A \tr BCD + \tr B \tr ACD + \tr C \tr ABD - \tr D \tr ACB  + \tr BC \tr AD \nonumber \\ 
 &  \qquad  + \tr AB \tr CD - \tr AC \tr BD + \tr B \tr D \tr AC - \tr A \tr B \tr CD \nonumber \\
 &  \qquad - \tr B \tr C \tr AD 
 \end{align}
Given $\gamma \in \Gamma$, write $\gamma$ as a word in the generators $\{\gamma_i\}_{i = 1}^n$.  Use the first and second trace identities to write $I_\gamma$ as a polynomial in trace functions of words with no inverses or exponents on letters higher than one. With the fourth trace identity one can reduce the word length to at most 3. Finally, the third identity allows one to write the trace functions in lexicographic order.
\end{proof} 

Order the $N = n(n^2 +5)/6$ elements of $\mathcal{G}$ lexicographically and use this ordering to define a map $t \co \R_{\SL} ( \Gamma ) \to \CC^N$ by $t( \rho ) = ( I_g ( \rho ) )_{ g \in \mathcal{G}}$.  Culler and Shalen proved that the image of $t$ is a closed algebraic set that is  in 1-1 correspondence with the set of  $\SL_2( \CC)$-characters of $\Gamma$ \cite{CS1}. Thus, $\X_{\SL}(\Gamma) := t ( \R_{\SL}( \Gamma))$ is called the \emph{character variety of $\Gamma$}. The elements of $\mathcal{G}$ serve as coordinate functions on $\X_{\SL}( \Gamma)$. 

Gonz\'alez-Acu\~na and Montesinos-Amilibia~\cite{GMA} exhibited specific defining equations for the character variety of a finitely presented group.   First, they exhibit a finite collection of polynomials which cut out the character variety for the free group $F_n$ on $n$ letters.  Next, they prove that $\X_{\SL} ( \Gamma )$ is the algebraic  subset of $\X_{\SL} ( F_n )$ cut out by the $m(n+1)$ polynomials 
\begin{align*}
 \{ I_{r_1} - 2, I_{ \gamma_1 r_1} - I_{\gamma_1}, \ldots,  I_{ \gamma_n r_1} - I_{\gamma_n}, I_{r_2} -2, \ldots, I_{\gamma_n r_2} - I_{\gamma_n}, \ldots, I_{\gamma_n r_m } - I_{\gamma_n}  \}. \end{align*}
Each of the above trace functions can be computed algorithmically, as can defining equations for the character variety of the free group on $n$ letters.  By the fourth trace identity, we see that $\X_{\SL}(\Gamma)$ is cut out of $\CC^N$ by polynomials with rational coefficients.  This gives the following proposition: 

\begin{pro}
Given a presentation $\langle \gamma_1, \ldots, \gamma_n | r_1, \ldots, r_m \rangle $ for a group $\Gamma$,  defining equations for $\X_{\SL}( \Gamma)$ can be computed algorithmically. Moreover, $\X_{\SL} ( \Gamma)$ is defined over $\QQ$.
\label{prop:dfneqns}
\end{pro}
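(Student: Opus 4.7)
The plan is to make the algorithm sketched in the preceding discussion fully explicit and to track the coefficient field along the way. Following \cite{GMA}, producing defining equations for $\X_{\SL}(\Gamma)$ reduces to two algorithmic subtasks: first, write down defining equations for the character variety of the free group $F_n$ as an algebraic subset of $\CC^N$; second, convert each of the $m(n+1)$ regular functions $I_{r_i} - 2$ and $I_{\gamma_j r_i} - I_{\gamma_j}$ into an explicit polynomial in the coordinate functions $\mathcal{G}$.

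For the first subtask I would appeal directly to the explicit polynomial equations for $\X_{\SL}(F_n) \subseteq \CC^N$ exhibited in \cite{GMA}; these are written in the coordinates $\mathcal{G}$ and have integer coefficients. For the second, I would invoke the reduction procedure implicit in the proof of Proposition \ref{prop:tgfg}: given a word in $\gamma_1^{\pm 1}, \ldots, \gamma_n^{\pm 1}$, first use identities (1)--(2) to eliminate inverses and higher powers, then apply identity (4) repeatedly to bring each trace factor down to length at most three, and finally use identity (3) to place the indices of each resulting factor in strictly increasing order. The output is a polynomial in the $|\mathcal{G}| = N$ generators. Inspecting the identities shows that (1)--(3) have integer coefficients while (4) contributes at worst a factor of $1/2$, so every polynomial produced by the procedure lies in $\QQ[x_1, \ldots, x_N]$. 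Combined with the equations from the first subtask, this simultaneously shows that defining equations for $\X_{\SL}(\Gamma)$ can be computed algorithmically and that $\X_{\SL}(\Gamma)$ is defined over $\QQ$.

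The main obstacle is largely bookkeeping: one has to verify that the interleaved application of identities (3) and (4) terminates and yields a well-defined normal form. A standard monovariant --- for instance, the lexicographic pair consisting of the maximum word length among the trace factors present and the number of index inversions within them --- handles this, but the essential argument is already carried out in \cite{GMA}, so I would cite that paper rather than reproduce the details.
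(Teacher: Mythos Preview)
Your proposal is correct and follows essentially the same approach as the paper: the paper's justification for this proposition is the short paragraph immediately preceding it, which likewise reduces the problem to the explicit equations for $\X_{\SL}(F_n)$ from \cite{GMA} together with the algorithmic reduction of the $m(n+1)$ functions $I_{r_i}-2$, $I_{\gamma_j r_i}-I_{\gamma_j}$ via the trace identities of Proposition~\ref{prop:tgfg}, and observes that the fourth identity is the source of the rational (rather than integral) coefficients. Your write-up simply fleshes this out with a termination remark, which the paper defers to \cite{GMA}.
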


\section{Elements of Culler-Shalen theory}
\label{sec:preliminaries}

Fix a knot manifold $N$, let $\Gamma$ denote the fundamental group of $N$, and  set $\X_{\SL} ( N ) = \X_{\SL} (\Gamma) $.

If $F$ is a field with a valuation $v \co F^\times \to \ZZ$, then Bass-Serre theory gives a simplicial tree $T_v$ associated to $v$ and an action of $\SL_2 ( F)$ on $T$ \cite{JPS}.  A representation $\rho \co \Gamma \to \SL_2 ( F)$ induces an action of $\Gamma$ on $T_v$. When this action is nontrivial (that is, when no vertex of $T_v$ is fixed by $\Gamma$) a construction due to Stallings gives essential surfaces in $N$.  Essential surfaces that can be built from the above procedure are \emph{associated to the tree $T_v$}.  The next theorem demonstrates a connection between the topology of $N$, the representation $\rho$, and the valuation $v$.

\begin{thm}{\cite{CS1}}
Suppose there is a representation $\rho \co \Gamma \to \SL_2 ( F ) $ where $F$ is a field with a valuation $v$ such that the induced action of $\Gamma$ on the tree $T_v$ is nontrivial.  
\begin{enumerate}
\item If $v ( \tr  \rho ( \gamma) )  \geq 0$ for each $\gamma \in \pi_1 ( \bound N)$, then there is a closed essential surface associated to $T_v$.
\item Otherwise there is a unique element $\gamma \in \pi_1 ( \bound N) $ (up to inversion and conjugation) such that $v ( \tr \rho ( \gamma )) \geq 0$.  In this case, every essential surface associated to $T_v$ has non-empty boundary and $\gamma$ represents the boundary slope of these surfaces. 
\end{enumerate}
\label{thm:cs1}
\end{thm}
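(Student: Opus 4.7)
The plan is to combine the standard Bass--Serre dictionary for $\SL_2(F)$ with the Stallings--Culler--Shalen construction of surfaces dual to equivariant maps, and to read off the peripheral behaviour from the action of $\rho(\pi_1(\partial N))$ on $T_v$.

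The starting dictionary. For $A \in \SL_2(F)$, the translation length of $A$ on $T_v$ is $\max(0, -2v(\tr A))$. In particular, $A$ fixes a vertex of $T_v$ if and only if $v(\tr A) \geq 0$, and otherwise $A$ acts hyperbolically with a unique translation axis. I will use this throughout.

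Constructing $\Sigma$. Choose a triangulation $\tau$ of $N$, lift it to a $\Gamma$--equivariant triangulation $\widetilde{\tau}$ of $\widetilde{N}$, and build a $\Gamma$--equivariant map $\widetilde{f}\co \widetilde{N} \to T_v$ by sending each orbit of vertices to a single (equivariantly chosen) vertex of $T_v$, extending to edges as geodesic segments, and then across 2-- and 3--cells after a small equivariant perturbation that makes $\widetilde{f}$ transverse to the midpoints of edges of $T_v$. The preimage of the midpoints is a properly embedded $\Gamma$--invariant 2--complex whose quotient, after discarding inessential components (sphere components bound balls since $N$ is irreducible, and boundary parallel components can be pushed off), gives an essential surface $\Sigma \subset N$. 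Non-triviality of the action on $T_v$ guarantees $\Sigma$ is non-empty, and the fact that each component is dual to an edge-orbit together with the inclusion of edge-stabilizers into vertex-stabilizers shows $\pi_1$--injectivity; all of this is the Stallings--Culler--Shalen machinery which I would cite rather than redo.

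Reading off the boundary. Now restrict attention to the peripheral subgroup $\pi_1(\partial N) \cong \ZZ^2$. In case (1), every peripheral element is elliptic by the dictionary. Since $\pi_1(\partial N)$ is abelian and finitely generated, and commuting elliptic isometries of a tree have intersecting fixed trees (the Helly property for subtrees), the whole group $\rho(\pi_1(\partial N))$ fixes a common vertex $v_0 \in T_v$. Then $\widetilde{f}$ can be equivariantly homotoped so that the lift of $\partial N$ maps to $v_0$, and in particular is disjoint from the preimage of edge-midpoints; hence $\Sigma$ is closed. In case (2), some peripheral element $\alpha$ has $v(\tr \rho(\alpha)) < 0$ and so acts hyperbolically on a unique axis $L$; any element commuting with $\alpha$ preserves $L$, so $\rho(\pi_1(\partial N))$ stabilizes $L$ and, being torsion-free abelian, acts on $L$ by translations via a homomorphism $\ell\co \pi_1(\partial N) \to \RR$. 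The kernel of $\ell$ is exactly the set of peripheral elements fixing $L$ pointwise, equivalently those with $v(\tr \rho(\cdot)) \geq 0$; by hypothesis it is a rank-1 direct summand of $\ZZ^2$, generated by a primitive $\gamma$ unique up to inversion. A standard argument with the equivariant map on $\partial \widetilde{N}$ shows that the components of $\Sigma \cap \partial N$ are circles whose isotopy class is dictated precisely by $\ker \ell$, i.e., they are parallel to $\gamma$.

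The main obstacle here is the Stallings construction itself -- in particular, verifying that after trimming one still has a non-empty essential surface, and that the peripheral behaviour really is controlled by $\ker \ell$ in case (2). The former relies on the non-triviality of the tree action (so edge-dual pieces do survive the trimming) and on irreducibility of $N$; the latter requires an equivariant homotopy of $\widetilde{f}|_{\partial \widetilde{N}}$ to a map whose preimage of midpoints consists of lines parallel to $\ker \ell$. Both are carried out in \cite{CS1} and I would cite them rather than reprove them.
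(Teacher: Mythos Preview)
The paper does not prove this theorem: it is stated with the citation \cite{CS1} and used as a black box, so there is no ``paper's own proof'' to compare against. Your sketch is a faithful outline of the original Culler--Shalen argument (Bass--Serre translation-length dictionary, Stallings' equivariant map to the tree, and the peripheral analysis via the axis of a hyperbolic element), and as a summary it is essentially correct.

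One small imprecision: in case~(2) you say the kernel of $\ell\co\pi_1(\partial N)\to\RR$ is a rank-$1$ summand ``by hypothesis.'' The hypothesis only rules out rank~$2$ (that would be case~(1)); the reason the kernel cannot have rank~$0$ is that translation lengths along an axis in the simplicial tree $T_v$ take values in $\ZZ$, so $\ell$ factors through a cyclic group and $\ker\ell$ automatically has rank at least~$1$. With that remark in place, your uniqueness claim for $\gamma$ (up to inversion, and then up to conjugacy of the peripheral subgroup in $\Gamma$) goes through.
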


The character variety $\X_{\SL} ( N )$ provides two ways of finding fields equipped with valuations and hence essential surfaces in $N$:  by passing to ideal points and by carrying algebraic non-integral representations.  

\noindent \emph{Ideal points:} The dimension of $\X_{\SL}(N)$ is at least one \cite{CCGLS}, so take an irreducible curve  $X \subset \X_{\SL}( N )$ with normalization $\phi \co \overline{X} \to X$ and a smooth projective model $\wt{X}$ for $X$.  Then there is a birational isomorphism $\iota \co \wt{X} \to \overline{X}$ whose inverse is defined on all of $\overline{X}$.   The elements of the set 
\[ \wt{X} - \iota^{-1} ( \overline{X} )  \] 
are the \emph{ideal points of $X$}.  Up to isomorphism $X$ has a unique smooth projective model, so the set of ideal points is well-defined.  Each rational function on $X$ extends to a function $\wt{X} \to \CC P^1 = \CC \cup \{\infty \}$ and to each ideal point $\hat{x}$ of $X$ there is a natural valuation $v_{\hat{x}}$ on the function field $\CC(X) = \CC( \wt{X})$ given by 
\[ v_{\hat{x}} (f) = \begin{cases}
-(\text{order of the pole of $f$ at $\hat{x}$}) & \text{if } f(\hat{x}) = \infty \\
0 &\text{it } f(\hat{x}) \in \CC - \{ 0 \} \\ 
\text{order of the zero of $f$ at $\hat{x}$} & \text{if } f(\hat{x}) = 0  \end{cases}. \]

There is a representation, often referred to as the \emph{tautological representation}, $\P \co \Gamma \to \SL_2 ( \CC ( \wt{X}))$.  That $X$ is a curve implies that the action of $\Gamma$ on the tree $T_{v_{\hat{x}}}$ for each ideal point $\hat{x}$ of $X$ is nontrivial. Hence Theorem \ref{thm:cs1} applies. 

\noindent \emph{ANI-representations:} Suppose $F$ is a number field (i.e.\thinspace a finite extension of $\QQ$) and $\rho \co \Gamma \to \SL_2 ( F)$ is a representation.  We say $\rho$ is \emph{algebraic non-integral}, or \emph{ANI} if there is some element $\gamma \in \Gamma$ such that $\tr \rho ( \gamma)$ is not integral over $\ZZ$.  Recall that the integral closure of $\ZZ$ in $F$ is the intersection of all the valuation rings of $F$.  Since $\tr  \rho ( \gamma)$ is not integral over $\ZZ$, there is some valuation $v$ on $F$ such that $v(\tr \rho (\gamma)) < 0$.  Hence, there is an essential surface in $N$ associated to the tree $T_v$ by Theorem \ref{thm:cs1}

 Chesebro~\cite{C} noticed a connection between the  detection of essential surfaces by ideal points and an infinite collection of modules associated to the coordinate ring $\CC[X]$ of $X$ which we now describe. 

For any unital subring $R$ of $\CC$, let $T_R ( X)$ denote the $R$-subalgebra of $\CC[X]$ generated by $\mathcal{G}$, noting that $T_\CC ( X) = \CC[X]$.  If $\gamma \in \pi_1 ( N )$, then $I_\gamma$ is an element of $T_{\ZZ}(X)$ and hence $R[I_\gamma] \subseteq T_R(X)$ for each unital subring $R\subseteq \CC$.  In particular, we may view $T_R(X)$ as a $R[I_\gamma]$-module.  The following theorem relates the detection of essential surfaces by ideal points to these modules. 

\begin{thm}{(Chesebro~\cite[Theorem 1.2]{C})}
Let $X$ be an irreducible component of $\X_{\SL}(N)$ and take $R \in \{ \ZZ, \QQ, \CC \}$. Then 
\begin{enumerate}
\item $T_R(X)$ is not finitely generated as an $R[I_\gamma]$-module for each $\gamma \in \pi_1 ( \bound N)$ if and only if $X$ detects a closed essential surface.   
\item Otherwise, $T_R(X)$ is not finitely generated as a $R[I_\gamma]$-module for some $\gamma \in \pi_1 (\bound N)$ if and only if $\gamma$ represents a boundary slope detected by $X$. 
\end{enumerate}
\label{thm:cheese}
\end{thm}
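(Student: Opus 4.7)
The plan is to translate finite generation of $T_R(X)$ as an $R[I_\gamma]$-module into a valuation condition at the ideal points of $X$, and then invoke Theorem~\ref{thm:cs1} together with the rank-two structure of $\pi_1(\bound N)$ to extract an essential surface.

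The central step is the following dictionary, which I would first establish for $R = \CC$: the coordinate ring $T_\CC(X) = \CC[X]$ is a finitely generated $\CC[I_\gamma]$-module if and only if $v_{\hat{x}}(I_\gamma) < 0$ at every ideal point $\hat{x}$ of $X$. If $I_\gamma$ has a pole at every ideal point, then the induced morphism $\wt{X} \to \PP^1$ sends every ideal point to $\infty$ and therefore restricts to a finite morphism onto $\mathbb{A}^1$, from which finite generation follows. Conversely, if $v_{\hat{x}}(I_\gamma) \geq 0$ at some ideal point $\hat{x}$, then $\CC[I_\gamma]$ lies in the valuation ring $\O_{v_{\hat{x}}} \subset \CC(X)$; however, because $\hat{x}$ is not a point of the normalization $\overline{X}$, the fact that valuations dominating a local ring $\O_{X,x_0}$ correspond to points of $\overline{X}$ above $x_0$ forces some $f \in \CC[X]$ to satisfy $v_{\hat{x}}(f) < 0$, and any such $f$ is not integral over $\CC[I_\gamma]$. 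I would then extend the dictionary to $R = \QQ$ and $R = \ZZ$ using that $X$ is defined over $\QQ$ by Proposition~\ref{prop:dfneqns} and that the algorithm of Proposition~\ref{prop:tgfg} writes every trace function as a $\QQ$-polynomial in $\mathcal{G}$, allowing monic integral relations to be transported between $\CC[I_\gamma]$, $\QQ[I_\gamma]$ and $\ZZ[I_\gamma]$ after clearing denominators.

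With the dictionary in hand, for each ideal point $\hat{x}$ set $S_{\hat{x}} := \{\gamma \in \pi_1(\bound N) : v_{\hat{x}}(I_\gamma) \geq 0\}$. Theorem~\ref{thm:cs1} says each $S_{\hat{x}}$ is either all of $\pi_1(\bound N)$, in which case $\hat{x}$ yields a closed essential surface, or else consists of the integer multiples of a single boundary slope, in which case $\hat{x}$ detects exactly that slope. The dictionary now reads: $T_R(X)$ fails to be finitely generated over $R[I_\gamma]$ if and only if $\gamma \in S_{\hat{x}}$ for some $\hat{x}$. If this failure holds for every $\gamma \in \pi_1(\bound N)$, then the finitely many $S_{\hat{x}}$ cover $\pi_1(\bound N) \cong \ZZ^2$; since $\ZZ^2$ is not a finite union of rank-one subgroups, some $S_{\hat{x}}$ must equal all of $\pi_1(\bound N)$, producing the closed essential surface required by part (1). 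The converse direction of (1) is immediate. Part (2) then falls out of the same dictionary: once no $S_{\hat{x}}$ is all of $\pi_1(\bound N)$, failure of finite generation for a specific $\gamma$ pinpoints an ideal point whose detected boundary slope matches that of $\gamma$.

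The main technical obstacle I expect is the $R$-independence of the finite generation condition. The case $R = \CC$ is the classical algebro-geometric dictionary, but descending to $R = \ZZ$ is delicate because integrality over a polynomial ring can depend subtly on the base ring; the argument must lean essentially on the $\ZZ$-form of the trace identities (1)--(4) and the $\QQ$-model of $X$ in order to transport monic relations cleanly between $\CC[I_\gamma]$, $\QQ[I_\gamma]$ and $\ZZ[I_\gamma]$.
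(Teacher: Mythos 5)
The paper does not contain its own proof of this statement; it is quoted verbatim from Chesebro~\cite[Theorem 1.2]{C} with no proof supplied, so there is nothing in the present text to compare against. With that understood, your outline does correctly reconstruct the core of Chesebro's argument \emph{in the case that $X$ is a curve}: the dictionary ``$\CC[X]$ is finite over $\CC[I_\gamma]$ iff $I_\gamma$ has a pole at every ideal point of $X$'' is exactly his module-theoretic reformulation (one direction by integral closedness of the valuation ring $\O_{v_{\hat{x}}}$, the other by finiteness of $\wt{X}\to\PP^1$ followed by descent along $\CC[X]\subseteq\CC[\overline{X}]$), and pairing it with Theorem~\ref{thm:cs1} and the elementary observation that $\ZZ^2$ is not a finite union of rank-one subgroups is the right way to split into the closed-surface and boundary-slope cases.

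The genuine gap is dimensional. The theorem, as stated and as Chesebro proves it, applies to an arbitrary irreducible component $X$ of $\X_{\SL}(N)$, which can have dimension $\geq 2$, and ``$X$ detects a closed essential surface'' is defined through curves inside $X$. Your dictionary is phrased entirely via the smooth projective model $\wt{X}$ of a \emph{curve} and its finitely many ideal points; for $\dim X > 1$ there is no canonical finite set of ideal points, and one must instead argue with divisorial valuations of $\CC(X)$ that are nonnegative on $\CC[I_\gamma]$ but not on $\CC[X]$, and then produce a curve in $X$ whose ideal point realizes such a valuation. That reduction is where the nontrivial work in Chesebro's proof sits and is absent from your sketch. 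You also flag the $R$-descent as a difficulty, and rightly so: it is not a matter of clearing denominators but requires, as in the paper's own Corollary~\ref{cor:overqq}, that $X$ is defined over $\QQ$ together with a Galois-fixed-field argument, plus a separate argument (via the $\ZZ$-form of the trace identities) to pass from $\QQ$ to $\ZZ$.
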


	\section{Main Results}
	\label{sec:main}

We aim to prove that there are knots in $S^3$ whose complement contains closed essential surfaces, none of which are detected by the character variety.  A corollary to Theorem~\ref{thm:cheese} which replaces $X$ with an arbitrary union of irreducible components of $\X_{\SL} ( N)$ will be our main tool.

\begin{cor}
Suppose $Y = X_1 \cup \cdots \cup X_n$ is a union of irreducible components of $\X_{\SL}(N)$.  Then $Y$ does not detect a closed essential surface if and only if $\CC[Y]$ is finitely generated as a $\CC[I_\gamma]$-module for some $\gamma \in \pi_1 ( \bound N)$ which is not a boundary slope of $N$. 

Moreover, when $\CC[Y]$ is finitely generated, it is a free $\CC[I_\gamma]$-module. 
\label{cor:union}
\end{cor}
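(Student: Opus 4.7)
The plan is to upgrade Theorem~\ref{thm:cheese}, which applies to a single irreducible component, to the finite union $Y = X_1 \cup \cdots \cup X_n$ by sandwiching $\CC[Y]$ between each $\CC[X_i]$ and their product $\prod_i \CC[X_i]$. Two standard facts about a reduced affine variety $Y$ and its irreducible decomposition do the work: for each $i$ the restriction map $\CC[Y] \twoheadrightarrow \CC[X_i]$ is surjective, and the diagonal restriction $\CC[Y] \hookrightarrow \prod_i \CC[X_i]$ is injective (since $I(Y) = \bigcap_i I(X_i)$). Coupled with the observation that every ideal point of $Y$ lies on a unique irreducible component, so that $Y$ detects a closed essential surface if and only if some $X_i$ does, the corollary reduces to Theorem~\ref{thm:cheese} applied component-by-component.

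For the $(\Leftarrow)$ direction, assume $\CC[Y]$ is finitely generated as a $\CC[I_\gamma]$-module for some $\gamma \in \pi_1(\bound N)$ which is not a boundary slope of $N$. Pushing through the surjection $\CC[Y] \twoheadrightarrow \CC[X_i]$ shows that $\CC[X_i]$ is finitely generated as a $\CC[I_\gamma|_{X_i}]$-module for each $i$. By Theorem~\ref{thm:cheese}(1) no $X_i$ detects a closed essential surface, so neither does $Y$.

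For the $(\Rightarrow)$ direction, assume no $X_i$ detects a closed essential surface. Theorem~\ref{thm:cheese}(2) identifies the $\gamma$ for which $\CC[X_i]$ fails to be finitely generated over $\CC[I_\gamma]$ with the finite set of boundary slopes detected by $X_i$; the union over $i$ is contained in the (also finite, by Hatcher's theorem) set of boundary slopes of $N$. Choose $\gamma \in \pi_1(\bound N)$ outside this finite set. Then each $\CC[X_i]$ is finitely generated over $\CC[I_\gamma]$, so $\prod_i \CC[X_i]$ is too, and since $\CC[I_\gamma] \cong \CC[t]$ is Noetherian the submodule $\CC[Y] \subseteq \prod_i \CC[X_i]$ is finitely generated as well.

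For freeness, the structure theorem for finitely generated modules over the PID $\CC[I_\gamma]$ reduces everything to checking torsion-freeness. Finite generation of $\CC[Y]$ forces $I_\gamma$ to be non-constant on each $X_i$ (otherwise $\CC[X_i]$ would be a finite-dimensional $\CC$-algebra, contradicting $\dim X_i \geq 1$), so $I_\gamma - a$ is a non-zero-divisor in each domain $\CC[X_i]$ for every $a \in \CC$. Via the injection $\CC[Y] \hookrightarrow \prod_i \CC[X_i]$, the same holds in $\CC[Y]$, giving torsion-freeness and hence freeness. I anticipate the main bookkeeping issue will be verifying that every ideal point of $Y$ corresponds to an ideal point of a single component; but this is standard for smooth projective models of reducible curves, since normalization separates irreducible components.
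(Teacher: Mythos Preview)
Your proof is correct and follows essentially the same route as the paper's: both sandwich $\CC[Y]$ between the individual $\CC[X_i]$ (via the restriction surjections) and $\bigoplus_i \CC[X_i]$ (via the diagonal injection, using $I(Y)=\bigcap_i I(X_i)$), and then invoke Theorem~\ref{thm:cheese} componentwise. The only cosmetic difference is that the paper obtains finite generation and freeness of $\CC[Y]$ in one stroke---as a submodule of the finitely generated free module $\bigoplus_i \CC[X_i]$ over the PID $\CC[I_\gamma]$---whereas you first deduce finite generation from Noetherianity and then argue torsion-freeness separately; both are equivalent packagings of the same PID fact.
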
 

\begin{proof}
Take an element $\gamma \in \pi_1 ( \bound N)$ which does not represent a boundary slope of $N$.  If $Y$ does not detect a closed essential surface, then, for each $i$, $\CC[X_i]$ is a finitely generated $\CC[I_\gamma]$-module by Theorem~\ref{thm:cheese}.  Since $X_i$ is irreducible, $\CC[X_i]$ is an integral domain, and so $\CC[X_i]$ is torsion free over $\CC[I_\gamma]$.  In particular, $\CC[X_i]$ is free over $\CC[I_\gamma]$ since $\CC[I_\gamma]$ is a principle ideal domain.  

Suppose $\X_{\SL}(N) \subseteq \CC^k$.  Let $\Phi \co \CC[z_1, \ldots, z_k] \to \CC[Y]$ and $\phi_i \co \CC[z_1, \ldots, z_k] \to \CC[X_i]$ denote the natural epimorphisms induced by the inclusions $Y \hookrightarrow \CC^k$ and $X_i \hookrightarrow \CC^k$.  Define a function 
 $\Psi \co \CC[Y] \to \oplus_1^n \CC[X_i]$ by 
 \[ \Psi ( \Phi( f )) = ( \phi_1 ( f ) , \ldots, \phi_n ( f )) \quad \text{for} \quad f\in \CC[z_1, \ldots, z_k]. \]
 If we regard $\oplus_1^n \CC[X_i]$ as a $\CC[I_\gamma]$-module with the diagonal action, then $\Psi$ is $\CC[I_\gamma]$-linear. 
 
 We claim that $\Psi$ is injective.  Take a polynomial $f \in \CC[z_1, \ldots, z_n]$ such that $\phi_i ( f) $ is the zero function on $X_i$ for each $i$.  Then $f$ is an element of the ideal $I(X_i)$ of $X_i$ and hence 
 \[  f \in \bigcap_1^n I(X_i) = I(Y). \]
Thus $\Psi$ is a monomorphism and $\CC[Y]$ is isomorphic to a submodule of a finitely generated free module.  Finally, since $\CC[I_\gamma]$ is a PID, $\CC[Y]$ must be a finitely generated free $\CC[I_\gamma]$-module.  
 
 Now suppose toward a contradiction that $\CC[Y]$ is finitely generated over $\CC[I_\gamma]$ and that $Y$ detects a closed essential surface.  Then, by Theorem \ref{thm:cheese}, $\CC[X_i]$ is not finitely generated over $\CC[I_\gamma]$ for some $1 \leq i \leq n$.  But the inclusion $X_i \hookrightarrow Y$ induces a surjection $\CC[Y] \to \CC[X_i]$.  This gives a contradiction since the image of a generating set for $\CC[Y]$ would generate $\CC[X_i]$.    
\end{proof}

Corollary \ref{cor:union} demonstrates how we transform the question of whether or not $\X_{\SL}(N)$ detects a closed essential surface into a computational commutative algebra problem.  The program \textit{Macaulay2} uses inexact numbers when working over $\CC$, so we must extend Corollary \ref{cor:union} so that the coefficient field is $\QQ$ where the program performs exact calculations. The proof of the following corollary is essentially \cite[Proposition 16]{K}.    

\begin{cor} 
Take $Y = X_1 \cup \cdots \cup X_n$ to be a union of irreducible components of $\X_{\SL}(N)$ and fix $\gamma \in \pi_1 ( \bound N)$.  Suppose  $Y$ is defined over $\QQ$. 
Then $T_\QQ ( Y)$  is finitely generated over $\QQ[I_\gamma]$ if and only if $\CC[Y]$ is finitely generated over $\CC[I_\gamma]$. 

In particular $Y$ does not detect a closed essential surface if and only if $T_\QQ (Y)$ is finitely generated over $\QQ[I_\gamma]$ for some $\gamma \in \pi_1 ( \bound N)$ that does not represent a boundary slope of $N$.
\label{cor:overqq}
\end{cor}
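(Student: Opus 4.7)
The plan is to prove the first equivalence by faithfully flat descent, using the hypothesis that $Y$ is defined over $\QQ$, and then deduce the second statement by combining with Corollary~\ref{cor:union}.

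First I would establish the key identification $T_\QQ(Y) \otimes_\QQ \CC = \CC[Y]$. The elements of $\mathcal{G}$ are coordinate functions on $\X_{\SL}(\Gamma) \subseteq \CC^N$, and because $Y$ is defined over $\QQ$, its ideal $I(Y) \subseteq \CC[\mathcal{G}]$ is generated by $I(Y) \cap \QQ[\mathcal{G}]$. Hence $T_\QQ(Y) \cong \QQ[\mathcal{G}]/(I(Y) \cap \QQ[\mathcal{G}])$, and the natural map $T_\QQ(Y) \otimes_\QQ \CC \to \CC[Y]$ is a $\CC$-algebra isomorphism; the same observation gives $\CC[I_\gamma] = \QQ[I_\gamma] \otimes_\QQ \CC$.

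The forward direction of the main equivalence is then immediate: tensoring a finite $\QQ[I_\gamma]$-generating set of $T_\QQ(Y)$ with $\CC$ over $\QQ$ gives a finite $\CC[I_\gamma]$-generating set of $\CC[Y]$. For the converse I would invoke faithful flatness of $\CC$ over $\QQ$ (since $\CC$ is $\QQ$-free), which base-changes to faithful flatness of $\CC[I_\gamma]$ over $\QQ[I_\gamma]$. Given generators $x_1, \ldots, x_r$ of $\CC[Y]$ as a $\CC[I_\gamma]$-module, I write each $x_i$ as a finite $\CC$-linear combination of elements of $T_\QQ(Y)$ and let $M \subseteq T_\QQ(Y)$ be the $\QQ[I_\gamma]$-submodule they span. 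Flatness makes $M \otimes_\QQ \CC \hookrightarrow \CC[Y]$ injective; the map is surjective by construction, so $(T_\QQ(Y)/M) \otimes_\QQ \CC = 0$, and faithful flatness forces $T_\QQ(Y) = M$, which is finitely generated. The second assertion now follows by combining this equivalence with Corollary~\ref{cor:union}: $Y$ fails to detect a closed essential surface precisely when $\CC[Y]$ is finitely generated over $\CC[I_\gamma]$ for some $\gamma \in \pi_1(\bound N)$ that is not a boundary slope.

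The main obstacle is the identification $T_\QQ(Y) \otimes_\QQ \CC = \CC[Y]$, where the hypothesis that $Y$ is defined over $\QQ$ is essential and amounts to checking that the ideal of $Y$ descends cleanly to $\QQ[\mathcal{G}]$. After that, the argument is a standard application of faithfully flat descent.
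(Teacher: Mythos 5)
Your proof is correct and takes a genuinely different route from the paper's. The paper's argument passes to a free $\CC[I_\gamma]$-basis $\mathcal{B}$ of $\CC[Y]$ chosen to lie inside $T_\ZZ(Y)$ (this is the content of Corollary~\ref{cor:union} together with remarks in Chesebro~\cite{C}), and then runs a Galois descent: writing $f \in T_\QQ(Y)$ as $\sum p_j b_j$ with $p_j \in \CC[I_\gamma]$, the action of $\aut(\CC/\QQ)$ on $\CC[Y]$ fixes $f$, freeness of $\mathcal{B}$ forces each $p_j$ to be fixed, and since the fixed field of $\aut(\CC/\QQ)$ is $\QQ$ one concludes $p_j \in \QQ[I_\gamma]$. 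You instead identify $T_\QQ(Y)\otimes_\QQ \CC \cong \CC[Y]$ (where the ``defined over $\QQ$'' hypothesis is consumed, via descent of the ideal $I(Y)$), and then apply faithfully flat descent of finite generation along $\QQ[I_\gamma]\to\CC[I_\gamma]$ directly, with no reference to freeness of $\CC[Y]$ or to the location of a basis. Your route is a bit more self-contained --- it does not lean on the ``basis in $T(X)$'' fact from Chesebro --- and the faithfully flat descent step is a textbook lemma; the paper's route is shorter given that the freeness of $\CC[Y]$ is already established in Corollary~\ref{cor:union} and the Galois action is a one-line observation once $Y$ is defined over $\QQ$. Both arguments are correct and both deduce the final assertion by combining the equivalence with Corollary~\ref{cor:union} in the same way.
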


\begin{proof}
First, observe that any generating set for $T_\QQ ( Y )$ over $\QQ[ I_\gamma]$ will automatically generate $\CC[Y]$ over $\CC[I_\gamma]$.  

Now suppose $\CC[Y]$ is finitely generated over $\CC[I_\gamma]$.  Then $\CC[Y]$ is a free module and we may take a free basis $\mathcal{B} = \{b_1, \ldots, b_m\}$ lying in $T(X)$ (see the remarks following  Corollary 2.5 in \cite{C}).  

We claim $\mathcal{B}$ spans $T_\QQ ( X)$ over $\QQ[I_\gamma]$. Take an element $f \in T_\QQ(X)$ and write  $ f = \sum_1^m p_j b_j $ for some $p_j \in \CC[I_\gamma]$.  The field automorphism group $\aut( \CC / \QQ )$ acts on $\CC[Y]$ since $Y$ is defined over $\QQ$.  Moreover, since $f \in T_\QQ (X)$, if $\sigma \in \aut ( \CC / \QQ)$, 
\[ \sigma\cdot f = f, \quad \text{so} \quad  \sum_1^m (p_j - \sigma \cdot p_j) b_j = 0. \] 
$\mathcal{B}$ is a free basis, so $p_j = \sigma \cdot p_j$ for every $\sigma \in \aut ( \CC / \QQ)$.  The fixed field of $\aut( \CC / \QQ )$ is $\QQ$ \cite[Theorem 9.29]{MFT}, so each $p_j \in \QQ[I_\gamma]$.  
\end{proof}

Corollaries \ref{cor:union} and \ref{cor:overqq} provide a way to prove our main result, Theorem \ref{thm:main}.  For a given knot manifold $N$, to determine whether $\X_{\SL}(N)$ detects any closed essential surface, one must first find a slope $\alpha$ which is not a boundary slope of $N$, then decide if $T_\QQ ( \X_{\SL}( N ))$ is finitely generated as a $\QQ[I_\alpha]$-module.  Fortunately, the \texttt{basis} command in \textit{Macaulay2} finds generating sets for modules over specified rings \cite{M2}. 

To begin our search, we needed to know which knots in $S^3$ have large complements.  Burton, Coward, and the third author \cite{BCT} developed an algorithm to check precisely this and \cite[Appendix F]{BCT} lists all 1019 large knots in $S^3$ that can be represented with diagrams with at most 12 crossings.  

Chesebro proved that if an irreducible curve in $\X_{\SL}(N)$ ANI-detects a closed essential surface, then it detects a closed essential surface with ideal points \cite[Proposition 5.2]{C}.  Goodman, Heard, and Hodgson compiled a partial list of all hyperbolic knots and links with up to 12 crossings whose discrete and faithful representations are ANI.  Using this table, we created a list of all hyperbolic large knots with 12 or fewer crossings whose discrete faithful representation is either not ANI or its integrality is unknown.  For each knot on our list, we recorded the number of tetrahedra in the triangulation given by the program \textit{SnapPy} \cite{SnapPy}.  Using the notation from Rolfsen's table \cite{ROL}, the knots in our table with the fewest tetrahedra in their \textit{SnapPy} triangulation are $10_{152}$ with $9$ tetraheadra, $10_{153}$ with $8$, and $10_{154}$ with $10$. 

\begin{figure}[htp]

\centering
\includegraphics[width=.3\textwidth]{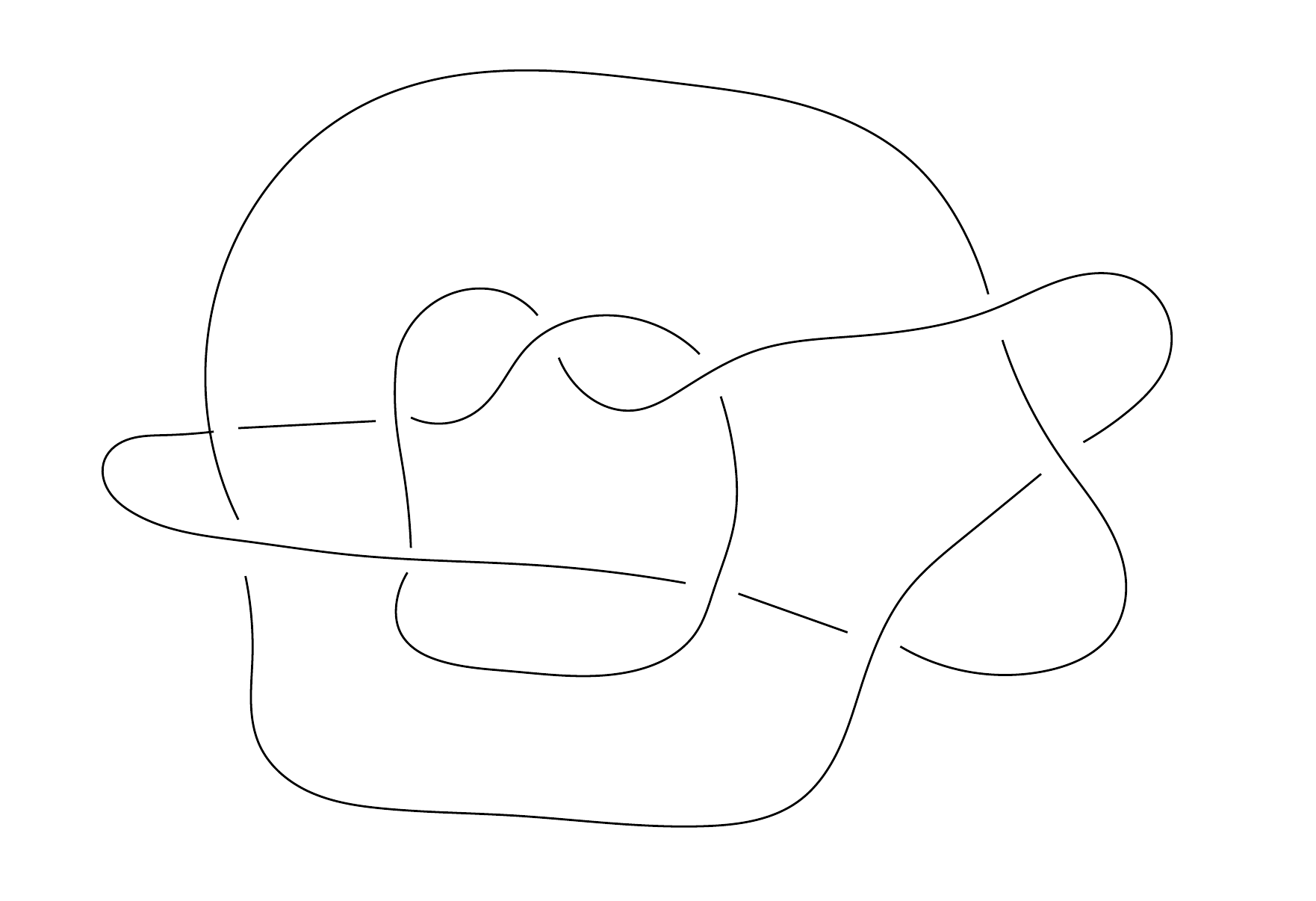}\hfill
\includegraphics[width=.3\textwidth]{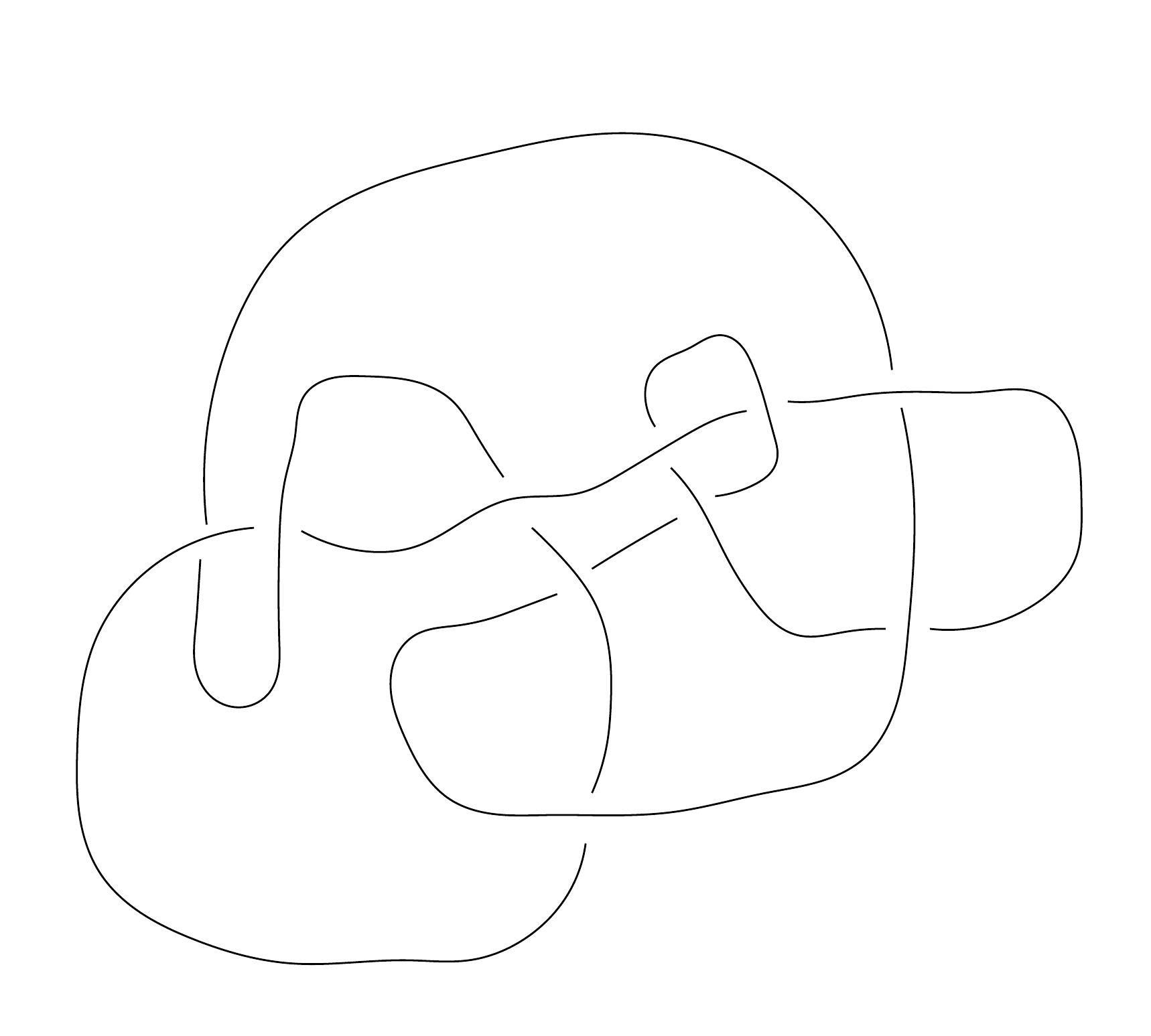}\hfill
\includegraphics[width=.3\textwidth]{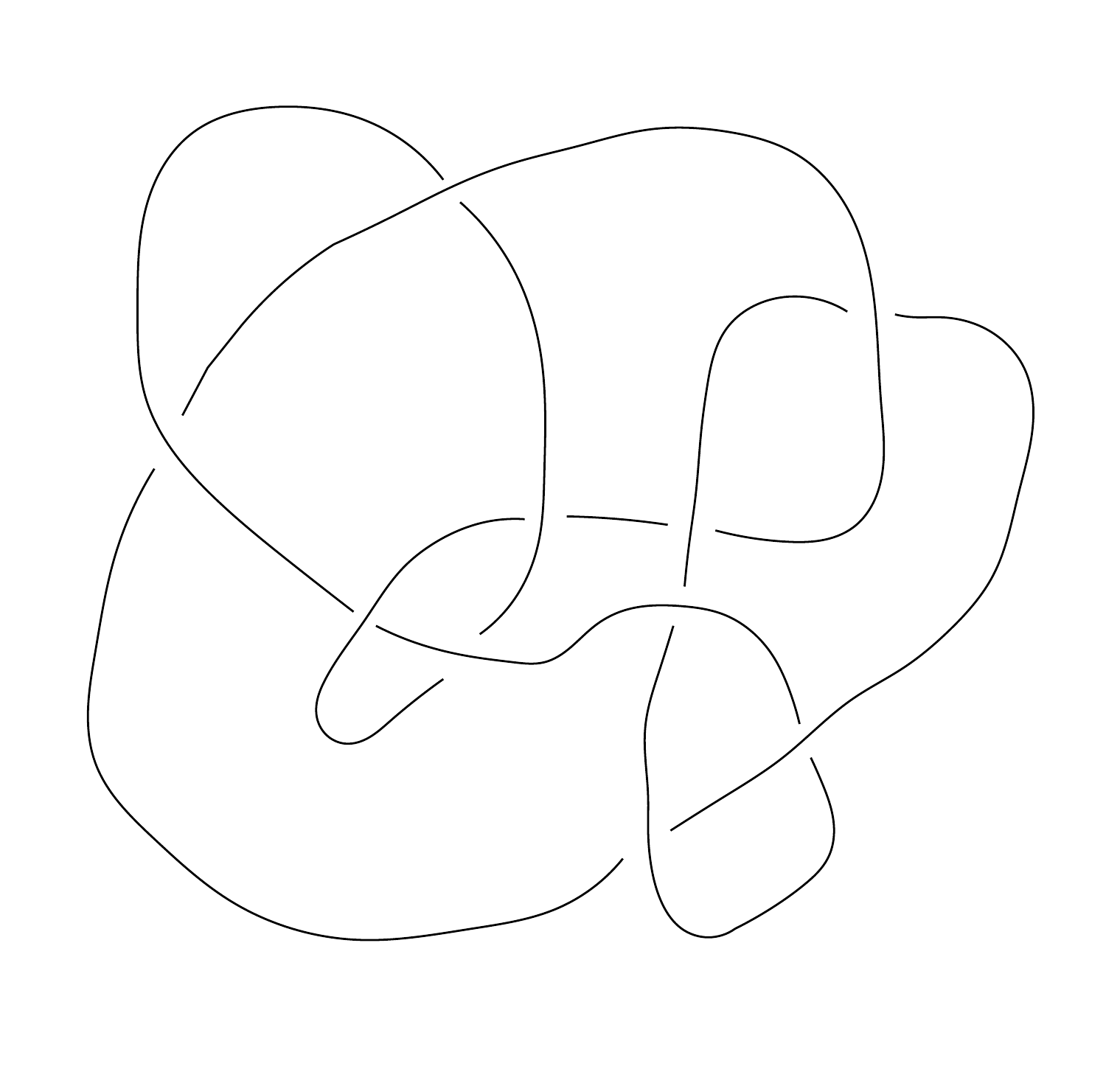}

\caption{The knots $10_{152}$, $10_{153}$, and $10_{154}$. }
\label{fig:knots}

\end{figure} 


\begin{proof}[Proof of Theorem~\ref{thm:main}]
We need to show that no essential surfaces is detected by an ideal point of the character variety.
We give the details for $N$ the complement of the knot $10_{153}$; the calculations for $10_{152}$ and $10_{154}$ follow along the same lines.
The package \textit{HIKMOT}~\cite{HIKMOT} certifies that the interior of $N$ admits a finite volume hyperbolic metric.
Now \textit{SnapPy}~\cite{SnapPy} gives the following presentation for the fundamental group of $N$:
\[  \pi_1 ( N ) \cong \langle a, b, c\mid abAbCaabAbcB, abCBcAc \rangle  \] 
where capital letters denote inverses.  A basis for $\pi_1  ( \bound N )$ is
\[ \{\; \mu = BAABa, \; \lambda = BAACaabCAbCBa \;\}. \] 
Note that $\inv{\mu} \lambda$ is conjugate to the element $CaabCAbC$ in $\pi_1(N)$, so $I_{\inv{\mu} \lambda} = I_{CaabCAbC}$. 

Now we use Propositions~\ref{prop:tgfg} and \ref{prop:dfneqns} to compute defining equations for $\X_{\SL} (N)$.  Using coordinates 
\[ ( x = I_a , y = I_b, z = I_c, w = I_{ab}, t = I_{ac}, u = I_{bc}, v = I_{abc} ) \]
we find that $\X_{\SL}(N)$ is cut out of $\CC^7$ by 9 polynomials $\{p_0, \ldots, p_8\}$: $p_0$ being the polynomial defining the character variety of the free group on 3 letters and the other 8 coming from our presentation for $\pi_1 ( N)$.  This collection of polynomials is long and unwieldy, so we elect not to display these polynomials here; the interested reader can find the polynomials in the ancillary files~\cite{CKT-anc}.

Define a new ideal $I$ generated by $\{p_0, \ldots, p_8, s - I_{\inv{\mu}\lambda}(x,y,z,w,t,u,v) \}$.  Then the zero set of $I$ is an embedding of $\X_{\SL}(N)$ into $\CC^8$ with coordinates $(x,y,z,w,t,u,v,s)$ such that $I_{\inv{\mu}\lambda}$ is the coordinate function $s$. 

Set $S = \QQ[s]$ and $ R = S[x,y,z,w,t,u,v]$.  Then $R/I \cong T_{\QQ} ( \X_{\SL} ( N ))$ since $\X_{\SL}(N)$ is defined over $\QQ$. To investigate whether or not $T_{\QQ} ( \X_{\SL} (N) )$ is finitely generated as an $S$-module, we first compute a Gr\"obner basis for $I$, then execute the command 
\[ \text{ \texttt{ basis( R / I , SourceRing => S ) }} \]
in \textit{Macaulay2}.  The command gives either an error if the module is not finitely generated over $S$ or a list of generators.  In our case, we get a list $\mathcal{L}$ of 48 monomials which can be found in the ancillary files~\cite{CKT-anc}.

While $\mathcal{L}$ may not be a free basis for $T_\QQ( \X_{\SL} (N))$ over $\QQ[I_{\inv{\mu} \lambda}]$, it is guaranteed to be at least a generating set.  In particular, by Corollary \ref{cor:overqq}, $\inv{\mu} \lambda$ is not a boundary slope of $N$ and $\X_{\SL}(N)$ does not detect any closed essential surfaces even though $N$ is a large knot manifold.
\end{proof} 

\begin{rem*}
The defining equations for $\X_{\SL}(N)$ were computed using a \textit{Mathematica} notebook written by Ashley, Burelle, and Lawton \cite{ABL} that is based on the Free Group Toolbox Version 2.0 \textit{Mathematica} notebook written by William Goldman \cite{GOLD}.  We independently verified the equations with a computation from first principles.
\end{rem*}

We first performed the above calculation for the knots $10_{152}$ and $10_{153}$ on a 2014 MacBook Air with a 1.4GHz processor and only 8GB of memory. The calculation for $10_{154}$ took 62 hours on a cluster with a 2.6GHz processor and used 33GB of memory at TU Berlin. This was done by Robert Loewe and Benjamin Lorenz. Both machines used version 1.11 of \textit{Macaulay2}. 

%

We end this note with an application of Theorem \ref{thm:main} answering Question 9 of Schanuel and Zhang~\cite{SZ} concerning algebraic non-integral representations into $\PSL_2 ( \CC)$.  Even though the trace of an element of $\PSL_2 ( \CC)$ is not well-defined, it is up to sign.  An element of a number field $F$ is integral over $\ZZ$ if and only if its negative is, so we say a representation $\rho \co \Gamma \to \PSL_2 (F)$ is \emph{algebraic non-integral} if there is some $\gamma \in \Gamma$ such that the trace of a lift of $\rho ( \gamma)$ to $\SL_2 (\CC)$ is algebraic non-integral.

\begin{cor}
There are infinitely many large closed hyperbolic 3--manifolds with no ANI-representations into $\PSL_2 ( \CC)$.  

\begin{proof} 
Let $N$ denote the complement of any knot with the property that the character variety of its complement has no ideal point detecting a closed essential surface (such as the knot $10_{152}$, $10_{153}$ or $10_{154}$).  Fix a basis $\{ \mu, \lambda \}$  for $\pi_1( \bound N) \subseteq \pi_1 ( N)$ such that $\mu$ is a meridian for $N$ and $\lambda$ may be represented by a simple closed curve on $\bound N$.   Let $N(p/q)$ denote the closed 3--manifold obtained by performing $p/q$ Dehn filling on $N$; that is, attach a solid torus to $\bound N$ such that the meridian of the solid torus is glued to $\bound N$ along a primitive curve representing  $\mu^p \lambda^q$. We obtain a presentation for $\pi_1 ( N ( p/q))$ by simply adding the relation $\mu^p \lambda^q$ to a presentation for $\pi_1(N)$.  In particular, there is an epimorphism  $\pi_1 (N ) \to \pi_1 ( N ( p/q))$ which induces inclusions 
\[ \X_{\SL} ( N(p/q)) \hookrightarrow \X_{\SL} ( N ) \quad \text{and} \quad \X_{\PSL}(N(p/q)) \to \X_{\PSL}(N). \]

There is a sequence  $\{p_i/q_i \}_{i=1}^\infty$ of slopes on $\bound N$ such that
\begin{enumerate}
\item $N(p_i / q_i)$ is  a large hyperbolic closed 3--manifold;
\item $p_i / q_i$ is not a boundary slope of $N$.
\end{enumerate} 
That $N(p_i/q_i)$ can be taken to be hyperbolic follows from Thurston's Hyperbolic Dehn Surgery Theorem \cite{THUR}.  We may assume $N(p_i/q_i)$ is large by \cite[Theorem 2.0.3]{CGLS}. Finally, $N$ has only finitely many boundary slopes \cite{HAT}, so we may choose  $p_i/q_i$ that are not boundary slopes of $N$. 

Fix $p_i / q_i$ and let $r \co \pi_1 ( N) \to \pi_1 ( N( p_i / q_i ))$ be the quotient map.   Suppose towards a contradiction that $N( p_i / q_i )$ has an algebraic non-integral representation  $\rho \co \pi_1 ( N(p_i/q_i)) \to \PSL_2 ( \CC)$.  Then $\rho \circ r$ is an ANI representation of $\pi_1(N)$ into $\PSL_2 ( \CC)$.  By \cite{CU}, this representation lifts to an ANI-representation $ \wt{\rho} \co \pi_1 (N ) \to \SL_2 ( \CC).$   

We claim that $\wt{\rho}$ must ANI-detect a closed essential surface in $N$.  Since $\rho\circ r$ factors through $\pi_1( N ( p_i / q_i))$, we must have $\wt{\rho}( \mu^{p_i}\lambda^{q_i}) = I \text{ or } -I$, where $I$ is the $2 \times 2$ identity matrix.  In particular, the character $\chi_{\wt{\rho}}$ evaluates to $\pm 2$ at    $\mu^{p_i}\lambda^{q_i}$, both of which are  integral  over  $\ZZ$.  Thus, by Theorem \ref{thm:cs1}, $\chi_{\wt{\rho}}$ either ANI-detects a closed essential surface or $\mu^{p_i}\lambda^{q_i}$ is a boundary slope of $N$.  This is not the case by construction, proving our claim.

Finally, Chesebro showed that if $\X_{\SL}(N)$ ANI-detects a closed essential surface, then it detects one by ideal points \cite[Proposition 5.2]{C}.  This contradicts our hypothesis, so $N(p_i / q_i)$ has no ANI-representations into $\PSL_2 ( \CC)$. 
\end{proof} 
\end{cor}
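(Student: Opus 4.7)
The plan is to manufacture the infinite family by Dehn filling on one of the knot complements produced by Theorem~\ref{thm:main}, then reduce a hypothetical ANI-representation on a filled manifold back to an ANI-detected closed essential surface in the unfilled manifold, which will contradict Theorem~\ref{thm:main} through Chesebro's result \cite[Proposition 5.2]{C}.

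First I would fix $N$ to be the complement of one of the knots $10_{152}$, $10_{153}$, $10_{154}$, and choose the standard meridian-longitude basis $\{\mu,\lambda\}$ of $\pi_1(\partial N)$. I would then assemble three cofinite conditions on slopes $p/q$: the filling $N(p/q)$ is hyperbolic, which is Thurston's Hyperbolic Dehn Surgery Theorem \cite{THUR}; it is large, because any closed essential surface in $N$ remains essential after all but finitely many Dehn fillings by \cite[Theorem 2.0.3]{CGLS}; and $p/q$ is not a boundary slope of $N$, which excludes only finitely many slopes by Hatcher's theorem \cite{HAT}. The intersection of these three conditions is infinite, producing a sequence $\{p_i/q_i\}$ with the required properties.

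Next, I would argue by contradiction: suppose some $N(p_i/q_i)$ admits an ANI-representation $\rho\co\pi_1(N(p_i/q_i))\to\PSL_2(\CC)$. Composing with the surjection $r\co\pi_1(N)\to\pi_1(N(p_i/q_i))$ induced by the filling yields an ANI-representation $\rho\circ r$ of $\pi_1(N)$ into $\PSL_2(\CC)$. Culler's lifting theorem \cite{CU} then produces a lift $\widetilde\rho\co\pi_1(N)\to\SL_2(\CC)$, which is still ANI because integrality of a trace is insensitive to sign. Crucially, $\mu^{p_i}\lambda^{q_i}\in\Ker(r)$, so $\widetilde\rho(\mu^{p_i}\lambda^{q_i})=\pm I$ and $\tr\widetilde\rho(\mu^{p_i}\lambda^{q_i})=\pm 2$ is an algebraic integer. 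Since $\widetilde\rho$ is non-integral, some valuation $v$ on the trace field has $v(\tr\widetilde\rho(\gamma))<0$ for some $\gamma\in\pi_1(N)$, giving a non-trivial action on the Bass-Serre tree. Applying Theorem~\ref{thm:cs1}: either case~(1) holds and $\widetilde\rho$ ANI-detects a closed essential surface, or case~(2) holds and the unique peripheral class of integral trace up to inversion and conjugation is a boundary slope of $N$. Since $\mu^{p_i}\lambda^{q_i}$ has integral trace but was chosen not to be a boundary slope, case~(2) is impossible, so a closed essential surface in $N$ is ANI-detected.

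Finally, by \cite[Proposition 5.2]{C}, ANI-detection of a closed essential surface forces detection by an ideal point of $\X_{\SL}(N)$, contradicting Theorem~\ref{thm:main}. The only delicate step is the application of Theorem~\ref{thm:cs1}: one must be careful that the integrality of $\tr\widetilde\rho(\mu^{p_i}\lambda^{q_i})$ combined with the a priori choice that $\mu^{p_i}\lambda^{q_i}$ avoids the finite set of boundary slopes of $N$ rigorously rules out case~(2) and leaves only the closed-surface case~(1). Everything else is packaging known results in the correct sequence.
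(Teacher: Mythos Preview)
Your proposal is correct and follows essentially the same approach as the paper's proof: Dehn fill one of the knot complements from Theorem~\ref{thm:main} along slopes satisfying the three cofinite conditions (hyperbolic via Thurston, large via \cite[Theorem 2.0.3]{CGLS}, non-boundary-slope via Hatcher), lift a hypothetical $\PSL_2(\CC)$ ANI-representation to $\SL_2(\CC)$ via Culler, use the integrality of $\tr\widetilde\rho(\mu^{p_i}\lambda^{q_i})=\pm 2$ together with Theorem~\ref{thm:cs1} to force ANI-detection of a closed surface, and finish with \cite[Proposition 5.2]{C}. The ordering, the cited results, and the handling of the dichotomy in Theorem~\ref{thm:cs1} all match the paper.
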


\bibliographystyle{plain}
\bibliography{ces}

	\address{Alex Casella,\\ School of Mathematics and Statistics F07,\\ The University of Sydney,\\ NSW 2006 Australia\\
	(casella@maths.usyd.edu.au)\\--}
	
	\address{Charles Katerba,\\ Department of Mathematical Sciences,\\ Montana State University \\ MT 59717 USA\\
	(charles.katerba@montana.edu)\\--}
	
	\address{Stephan Tillmann,\\ School of Mathematics and Statistics F07,\\ The University of Sydney,\\ NSW 2006 Australia\\
	(tillmann@maths.usyd.edu.au)}

	\Addresses


\end{document}